\DeclareFontFamily{OMS}{rsfs}{\skewchar\font'60}
\DeclareFontShape{OMS}{rsfs}{m}{n}{<-5>rsfs5 <5-7>rsfs7 <7->rsfs10 }{}
\DeclareSymbolFont{rsfs}{OMS}{rsfs}{m}{n}
\DeclareSymbolFontAlphabet{\scr}{rsfs}
\newcommand{\sF}{\scr{F}}
\newcommand{\sL}{\scr{L}}
\newcommand{\sO}{\scr{O}}
\newcommand{\bC}{\mathbb{C}}
\newcommand{\bP}{\mathbb{P}}
\newcommand{\sHom}{\scr{H}\negmedspace om}
\newcommand{\sm}{\text{sm}}
\newcommand{\red}{\text{red}}
\newcommand{\singu}{\text{sing}}
\theoremstyle{plain}    \newtheorem{thm}{Theorem}[section]
\numberwithin{equation}{thm}
\numberwithin{figure}{section}
\theoremstyle{plain}    
\newtheorem{lem}[thm]{Lemma}
\theoremstyle{plain}    
\newtheorem{prop}[thm]{Proposition}
\theoremstyle{remark}
\newtheorem{claim}[equation]{Claim} 
\newtheorem{notation}[thm]{Notation}
\newtheorem{add-ass}[equation]{Additional Assumption}
\definecolor{tomato}{RGB}{180,62,39}
\definecolor{forrest}{RGB}{81,133,49}
\definecolor{lighttomato}{RGB}{253,65,65}
\definecolor{lightforrest}{RGB}{145,237,87}
\definecolor{mygreen}{RGB}{40,104,69}
\definecolor{mygreen2}{RGB}{3,149,39}
\definecolor{darkolivegreen}{RGB}{102,118,75}
\definecolor{cranegreen}{RGB}{102,118,75}
\definecolor{mydarkblue}{RGB}{10,92,153}
\definecolor{myblue}{RGB}{57,222,186}
\definecolor{pinkish}{RGB}{213,83,222}
\definecolor{colD}{RGB}{213,83,222}
\definecolor{defb}{RGB}{213,83,222}
\definecolor{goldenrod}{RGB}{225,115,69}
\definecolor{mauve}{RGB}{224, 176, 255}
\definecolor{fuchsia}{RGB}{255, 0, 255}
\definecolor{lavender}{RGB}{230, 230, 250}
\definecolor{gold}{RGB}{255, 215, 0}
\definecolor{orange}{RGB}{255, 127, 0}
\definecolor{maroon}{RGB}{123, 17, 19}
\definecolor{brightmaroon}{RGB}{195, 33, 72}
\definecolor{richmaroon}{RGB}{176, 48, 96}
\definecolor{green}{RGB}{3,149,39}
\date{\today}
\author{Clemens J\"order}
\address{Clemens J\"order, Mathematisches Institut, Albert-Ludwigs-Universit\"at
  Freiburg, Eckerstraße 1, 79104 Freiburg im Breisgau, Germany}
\email{\href{mailto:c.joerder@web.de}{c.joerder@web.de}}
\thanks{The author gratefully acknowledges support by the DFG-Forschergruppe 790
  ``Classification of Algebraic Surfaces and Compact Complex Manifolds''.}
\keywords{differential forms, Lipman-Zariski conjecture, foliations}
\subjclass[2010]{14B05, 32S05, 32S65}
\title{A weak version of the Lipman-Zariski conjecture}
\begin{document}

\begin{abstract}
Let $X$ be a normal complex space such that the tangent sheaf $T_X$ is locally free and locally admits a basis consisting of pairwise commuting vector fields. Then $X$ is smooth.
\end{abstract}

\maketitle
\tableofcontents

\section{Introduction}
The Lipman-Zariski conjecture~\cite{Lip65} asserts that a complex variety with locally free tangent sheaf is necessarily smooth. In this paper we prove a weak version of the conjecture for complex spaces assuming an additional feature of the tangent sheaf of complex manifolds.

\begin{thm}[Weak version of the Lipman-Zariski conjecture]\label{thm-main}
Let $X$ be a normal complex space such that the tangent sheaf $T_X$ is locally free and locally admits a basis $v_1,\cdots,v_n$ consisting of pairwise commuting vector fields, i.e., $[v_i,v_j]=0$ for all $1\leq i,j\leq n$. Then $X$ is smooth.
\end{thm}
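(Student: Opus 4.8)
The plan is to integrate the commuting vector fields to a local holomorphic action of the additive group $\bC^n$ and to play this action off against normality, proceeding by induction on $n=\dim X$. The assertion is local, so I may shrink $X$ to a small Stein neighbourhood of a fixed point $x_0$ carrying a global frame $v_1,\dots,v_n$ of $T_X$. For $n=1$ a normal complex space is smooth, so there is nothing to prove; assume $n\ge 2$ and that the theorem holds in dimension $n-1$. For the integration step, embed $X$ locally into some $\bC^N$. The conormal sequence dualizes to an inclusion $T_X\hookrightarrow T_{\bC^N}|_X$, so each $v_i$ is the restriction of an ambient holomorphic vector field $\tilde v_i$ that is tangent to $X$ (tangency holds because $\tilde v_i(f)|_X=0$ for $f$ in the ideal of $X$); its flow therefore preserves $X$ and restricts to a local flow $\phi^i_t$ of $v_i$. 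Since $[v_i,v_j]=0$ these flows commute, so $\Phi(t_1,\dots,t_n;\,\cdot\,)=\phi^1_{t_1}\circ\dots\circ\phi^n_{t_n}$ defines a local $\bC^n$-action near $x_0$. Each $\phi^i_t$ is a local automorphism of $X$ and hence preserves both $X_{\reg}$ and $X_{\sing}$.

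Next I record the dimension count that both drives and constrains the argument. At a smooth point $p$ the evaluation $T_X\otimes\bC(p)\to T_pX$ is an isomorphism, so $v_1(p),\dots,v_n(p)$ are linearly independent and the orbit of $p$ is open; in particular the action is locally transitive on the connected set $X_{\reg}$ (normality makes $X$ locally irreducible). For $x\in X_{\sing}$, by contrast, the orbit $\bC^n\cdot x$ is again contained in $X_{\sing}$, whose codimension is at least $2$; hence the rank of $v_1(x),\dots,v_n(x)$ in $T_xX$ is at most $n-2$, and the evaluation $T_X\otimes\bC(x)\to T_xX$ is never injective along the singular locus.

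For the inductive step I use the leaf space. After a constant linear change of frame I arrange $v_n(x_0)\ne 0$ in $T_{x_0}X$ whenever the frame does not vanish identically there, integrate $v_n$ to a free local flow on a saturated neighbourhood, and form the quotient $\pi\colon X\to Y$, identified with a transverse slice. The main work is to show that $Y$ is a normal complex space of dimension $n-1$: one descends the Riemann--Hartogs extension property from $X$ using $\codim X_{\sing}\ge 2$. Because $v_1,\dots,v_{n-1}$ commute with $v_n$, they are invariant under its flow and push forward to well-defined commuting fields $\bar v_1,\dots,\bar v_{n-1}$ on $Y$, forming a frame of the locally free sheaf $T_Y$. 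By the induction hypothesis $Y$ is smooth, and $\pi$ is then a submersion with one-dimensional fibres, so $X$ is smooth wherever this reduction applies.

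The delicate point --- and the place where the commuting hypothesis is genuinely used --- is the locus where the entire frame evaluates to $0$ in $T_xX$, so that the induced $\bC^n$-action fixes $x$ and the quotient construction degenerates. My plan is to pass to a transverse slice $S$ to the orbit of such a point: its stabilizer is a connected subgroup $\bC^m\subseteq\bC^n$ with $m=\codim(\bC^n\cdot x)\ge 2$ acting on $S$ and fixing $x$, and I expect to reach a contradiction from the linearized isotropy representation, the $m$ commuting infinitesimal generators vanishing at $x$ while framing $T_S$ --- a configuration excluded once the dimension count of the previous paragraph is fed back into the evaluation map. Making this reduction clean, in particular ensuring that $S$ inherits a normal structure with locally free, commutingly framed tangent sheaf, is the step I expect to demand the most care.
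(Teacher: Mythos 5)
Your route is genuinely different from the paper's (which never integrates the vector fields: it dualizes the frame to reflexive $1$-forms $\alpha_i$, uses $[v_i,v_j]=0$ only through Cartan calculus to get $d\alpha_i=0$, extends the $\alpha_i$ across a resolution via the key extension result, and concludes by the argument of \cite[(1.6)]{SvS85}). Several of your steps are sound: the ambient extension and integration of the $v_i$, the commutation of the restricted flows, and the local product decomposition $X\cong Y\times\Delta$ at a point where some $v_i(x_0)\neq 0$ in $T_{x_0}X$, which makes the induction work at such points. But there is a genuine gap at exactly the case you flag as ``delicate'': a singular point $x_0$ at which \emph{every} $v_i$ evaluates to zero in $T_{x_0}X$. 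There your proposed reduction is vacuous and circular: the orbit of $x_0$ is the single point $\{x_0\}$, so the ``transverse slice'' $S$ is all of $X$, the ``stabilizer'' is all of $\bC^n$ with $m=n$, and you are facing the original problem again with no reduction in complexity. Moreover, no contradiction can be extracted from your dimension count together with the evaluation map: freeness of $T_X$ only says that the $v_i$ are a basis of the fiber $T_X\otimes\bC(x_0)$, and the evaluation $T_X\otimes\bC(x_0)\to T_{x_0}X$ may kill that entire fiber at a singular point; likewise the duality $\langle\alpha_i,v_j\rangle=\delta_{ij}$ pairs sheaf fibers, not Zariski tangent vectors, so it survives total vanishing of the evaluations. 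The commuting linearizations on $T_{x_0}X$ can perfectly well all be (for instance) nilpotent or zero as far as linear algebra is concerned; ruling this configuration out \emph{is} the theorem, not a lemma on the way to it.

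The gap is not a corner case --- it is where the entire content of the statement lives. Concretely, for $n=2$ every singular point of a normal surface is isolated, so (orbits of singular points being contained in $X_\sing$) all $v_i$ must vanish at every singular point, and your inductive step never applies: your argument proves nothing for surfaces. That surface/isolated-singularity case is precisely \cite[Cor.~2]{OR88}, which requires substantial work on local transformation groups and locally homogeneous complex spaces; your sketch would in effect have to reprove it, and then extend it to non-isolated loci of total vanishing. The paper sidesteps this case entirely: its Proposition on extension of closed $1$-forms (proved by induction on dimension, hyperplane cuts, and a negativity lemma) lets the standard argument of \cite[(1.6)]{SvS85} run at \emph{all} points simultaneously, with the commutativity hypothesis used only to establish closedness of the dual forms. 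As it stands, your proposal should be regarded as a correct reduction of the theorem to the fixed-point case, not as a proof.
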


As in many special cases of the conjecture proved so far, our result relies on an extension theorem for differential forms. The precise statement is the following.

\begin{prop}[Extension of closed $1$-forms]\label{prop-extension}
Let $X$ be a normal complex space and let $\alpha\in\Gamma(X_\sm,\Omega^1_{X_\sm})$ be a closed differential form defined on the smooth locus $X_\sm\subset X$, i.e., $d\alpha=0$. Then $\alpha$ extends to any resolution of singularities of $\pi:\tilde{X}\to X$, i.e., there exists a section $\tilde{\alpha}\in H^0(\tilde{X},\Omega^1_{\tilde{X}})$ such that $\tilde{\alpha}|_{\pi^{-1}(X_\sm)}=\pi|_{\pi^{-1}(X_\sm)}^*(\alpha)$.
\end{prop}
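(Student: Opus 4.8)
The plan is to work locally on $X$ and to reduce the statement to the vanishing of the residues of $\pi^*\alpha$ along the exceptional divisor. Since the assertion is local, I fix a point of $X$, replace $X$ by a small contractible Stein neighborhood, and choose a resolution $\pi\colon\tilde X\to X$ that is an isomorphism over $X_\sm$ and whose exceptional set $E=\pi^{-1}(X_\sing)$ has simple normal crossings (any resolution is dominated by such a one, and a holomorphic $1$-form descends along a further blow-up of the smooth manifold $\tilde X$ since $\mu_*\Omega^1_{\tilde X'}=\Omega^1_{\tilde X}$; so this case suffices). The first observation is that the entire difficulty is the multivaluedness of a primitive: on any simply connected open subset of $X_\sm$ the closed form $\alpha$ is exact, $\alpha=dF$, and by normality of $X$ the holomorphic function $F$ extends across $X_\sing$, which has codimension $\ge 2$, so that $\pi^*\alpha=d(\pi^*F)$ is holomorphic there. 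Thus if $\alpha$ were exact on all of $X_\sm$ I would be done at once; the obstruction is the period homomorphism $[\alpha]\in H^1(X_\sm,\bC)=\Hom(H_1(X_\sm,\bZ),\bC)$, $\gamma\mapsto\int_\gamma\alpha$.

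Second, I would show that $\pi^*\alpha$ extends to a form with at worst logarithmic poles along $E$. Near a general point of a component $E_i$ I choose coordinates with $E_i=\{z_1=0\}$; then $\pi^{-1}(X_\sm)$ locally looks like $D^*\times D^{n-1}$, and the only local monodromy of the primitive is the translation $F\mapsto F+\lambda_i$, where $\lambda_i=\int_{\gamma_i}\alpha$ is the period along a small loop $\gamma_i$ encircling $E_i$. Subtracting the model term, $G:=F-\tfrac{\lambda_i}{2\pi i}\log z_1$ is single valued and holomorphic on $D^*\times D^{n-1}$, and under the isomorphism $\pi^{-1}(X_\sm)\cong X_\sm$ it corresponds to a holomorphic function on an open subset of $X_\sm$ which, again by normality, extends holomorphically across $X_\sing$ and is therefore bounded near $E_i$. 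By the Riemann extension theorem $G$ extends across $E_i$, whence $\pi^*\alpha=\tfrac{\lambda_i}{2\pi i}\tfrac{dz_1}{z_1}+(\text{holomorphic})$. This places $\pi^*\alpha$ in $H^0(\tilde X,\Omega^1_{\tilde X}(\log E))$ with residue $\operatorname{Res}_{E_i}\pi^*\alpha=\tfrac{\lambda_i}{2\pi i}$, a constant along $E_i$ by closedness.

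It then remains to prove that all these residues vanish, and this is the heart of the matter. The residue along $E_i$ is the image of $[\alpha]$ under the residue map $H^1(\pi^{-1}(X_\sm),\bC)\to\bigoplus_i H^0(E_i,\bC)$ of the Gysin (weight) sequence of the pair $(\tilde X,E)$, whose image is the kernel of the Gysin map sending the generator attached to $E_i$ to the class $[E_i]\in H^2(\tilde X,\bC)$. Since $\tilde X$ is homotopy equivalent to the compact exceptional fibre $E$, which is contracted by $\pi$ to a point of $X_\sing$, the intersection form on the components $E_i$ is negative definite (Grauert, Mumford); hence the classes $[E_i]$ are linearly independent, the Gysin map is injective, and the residue map is identically zero. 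When $X_\sing$ is not isolated one reduces to this case by a transverse slice through a general point of $\pi(E_i)$, using that $\operatorname{Res}_{E_i}$ is locally constant. Therefore every $\lambda_i=0$, the logarithmic poles disappear, and $\pi^*\alpha$ extends to the desired section $\tilde\alpha\in H^0(\tilde X,\Omega^1_{\tilde X})$.

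The main obstacle is this residue (equivalently period) vanishing in the third step: showing that the loops $\gamma_i$ around the exceptional components are rationally trivial in $H_1(X_\sm)$, i.e.\ that the intersection form of $E$ is nondegenerate, in the generality of higher dimension and non-isolated singularities. This is precisely where normality alone does not suffice and where the negative definiteness arising from the contraction---rather than any hypothesis on the type of singularity---does the work; the preceding logarithmic-extension step is by contrast a soft consequence of normality through Riemann extension.
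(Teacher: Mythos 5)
Your architecture (meromorphic extension with log poles, then residue vanishing) is genuinely different from the paper's, but it has a fatal gap at precisely the step you dismiss as ``soft'': the claim that $\pi^*\alpha$ extends with at worst logarithmic poles along $E$. Normality, via the second Riemann extension theorem, extends a holomorphic function given on $V\setminus A$ where $V\subset X$ is \emph{open} and $A\subset V$ is analytic of codimension $\geq 2$. Your function $G=F-\tfrac{\lambda_i}{2\pi i}\log z_1$ descends only to $W:=\pi(D^*\times D^{n-1})$, which is an open subset of $X_\sm$ --- hence already open in $X$ --- whose closure merely touches $X_\sing$: it is a thin ``horn'', not a punctured neighbourhood of any point of $X_\sing$. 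There is no ambient open set $V\subset X$ with $V\setminus X_\sing\subset W$, so neither the claimed extension across $X_\sing$ nor the boundedness of $G$ near $E_i$ follows. Concretely, your argument cannot distinguish a log pole from a higher-order pole with vanishing residue: if $\pi^*\alpha$ looked like $d(z_1^{-1})=-z_1^{-2}\,dz_1$ near a general point of $E_i$, then $\lambda_i=0$ and $G=-z_1^{-1}$ is single-valued and holomorphic on $D^*\times D^{n-1}$, and it descends to a perfectly good holomorphic function on the horn $W$; nothing in your proof yields a contradiction. Excluding exactly this possibility is the hard content of the proposition: the paper gets meromorphy softly (Lemma~\ref{lem-r-i}, via Grauert's coherence theorem) but then devotes all of Lemma~\ref{lem-discrete-case} (induction on dimension, Bertini hyperplane cuts, the surface case of \cite{SvS85} as the base) and the negativity lemma to controlling the pole order.

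Note also that you cannot sidestep the log-pole step by strengthening your third step to ``all periods vanish locally over $X$'': that is false. For the cone over an elliptic curve $C$, the smooth locus of a neighbourhood of the vertex has $H^1(X_\sm,\bC)\cong H^1(C,\bC)\neq 0$, and the pullbacks of holomorphic $1$-forms on $C$ give closed holomorphic $1$-forms on $X_\sm$ with nonzero periods which nevertheless extend to the resolution; only the residues, i.e.\ the periods along the loops $\gamma_i$ encircling the $E_i$, vanish. So in your architecture the log-pole step carries all the weight, and it is unproven. Your residue-vanishing step is sound in outline, but quoting Grauert--Mumford negative definiteness is not sufficient in dimension $\geq 3$: linear independence of the classes $[E_i]$ in $H^2(\tilde{X},\bC)$ there requires the negativity lemma (\cite[Lem.~3.6.2]{BCHM10}, in the analytic setting via \cite{Grau62}) --- which is the very same input the paper uses at the end of its proof of Proposition~\ref{prop-extension} --- and the slicing reduction for non-isolated singular loci needs the kind of care the paper takes with its projection $p:X\to Z$ and the items~(1)--(4) there.
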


 Proposition~\ref{prop-extension} as it stands does not hold for differential forms of higher degree. Counterexamples in degree $p\geq 2$ are given by Gorenstein non-canonical singularities of dimension $p$, e.g., a cone over a cubic curve in $\bP^2$.

Throughout the paper we make use of the following notation.
\begin{notation}
A resolution of singularities is a proper surjective holomorphic map $\pi:\tilde{X}\to X$ between a complex manifold $\tilde{X}$ and a reduced complex space $X$ such that there exists a nowhere dense analytic subset $A\subset X$ with nowhere dense preimage $\pi^{-1}(A)\subset\tilde{X}$ and $\pi^{-1}(X\backslash A)\to X\backslash A$ is an isomorphism. The holomorphic map $\pi$ is called a strong resolution if we can choose $A=X_\singu$ and $\pi^{-1}(A)_\red$ is a divisor with simple normal crossings.

We denote by $\Omega^i_X$ the usual sheaf of K\"ahler differential forms of degree $i\geq 0$ on a complex space $X$ as defined in~\cite[Def.~1]{Rei67}. If $X$ is normal and $j:X_\sm\to X$ is the inclusion of the smooth locus, we denote by $\Omega^{[i]}_X=j_*\Omega^i_{X_\sm}$ the sheaf of reflexive differential forms of degree $i$. Recall that the Second Riemann Removable Singularities Theorem~\cite[Thm.~71.12]{KK83} implies that the dual $\sF^*:=\sHom_{\sO_X}(\sF,\sO_X)$ of a coherent sheaf $\sF$ on $X$ satisfies $\sF^*=j_*(\sF|_{X_\sm}^*)$. The case $\sF=(\Omega^i_X)^*$ shows that $\Omega^{[i]}_X=j_*\Omega^i_{X_\sm}=j_*(\Omega^i_{X_\sm})^{**}=(\Omega^i_X)^{**}$ is a coherent sheaf.
\end{notation}

\subsection*{Previous results}
In the case of isolated singularities Theorem~\ref{thm-main} follows from \cite[Cor. 2]{OR88}. Observe that this includes in particular the two-dimensional case.

Previous results on the extension of differential forms are concerned with special kinds of singularities,~\cite[Sect. 2.3]{G80},~\cite{GKKP11,GKP12,Gr13}, and with differential forms of low degree in comparison to the codimension of the singular locus, see~\cite{SvS85,F88}. All these cases can be applied to the Lipman-Zariski conjecture, using the standard argument in~\cite[(1.6)]{SvS85}. Other approaches to the Lipman-Zariski conjecture can be found e.g. in~\cite{Hoch77,K11,Dr13}.

\subsection*{Acknowledgements}
The author was motivated to think about the Lipman-Zariski conjecture following interesting discussions with Patrick Graf, Daniel Greb and Sebastian Goette. The author would like to thank especially Stefan Kebekus, Daniel Greb and Patrick Graf for carefully reading a first version of this work. Karl Oeljeklaus kindly pointed to his and Richthofers results.

\section{Extension of closed differential forms of degree $1$}

Some parts of the following proof of Proposition~\ref{prop-extension} are inspired by the techniques in~\cite[§3]{F88}. However, the arguments in \textsl{loc. cit.} are formulated in the algebraic setting. Therefore we decided not to resort to these arguments during the proof.

We will use the following notation throughout the present section.

\begin{notation}\label{not-alpha-tilde}
Let $\pi:\tilde{X}\to X$ and $\alpha$ be as in Proposition~\ref{prop-extension}. We denote by $\tilde{\alpha}\in\Gamma(\pi^{-1}(X_\sm),\Omega^1_{\tilde{X}})$ the pull-back of $\alpha$ by $\pi|_{\pi^{-1}(X_\sm)}$.
\end{notation}

The following lemma is obvious in the algebraic setting. For the reader's convenience we include a short proof in the holomorphic case.

\begin{lem}\label{lem-r-i}
Let $\pi:\tilde{X}\to X$, $\alpha$ and $\tilde{\alpha}$ be as in Notation~\ref{not-alpha-tilde}. Let further $E_i$, $i\in I$, be the $\pi$-exceptional divisors. Then $\alpha$ has only poles along $E_i$, i.e., there exist minimal non-negative integers $r_i\geq 0$ such that $\tilde{\alpha}\in\Gamma(\tilde{X},\Omega^1_{\tilde{X}}(\sum_ir_i\cdot E_i))$.
\end{lem}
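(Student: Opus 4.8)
The plan is to reduce the statement to a local question near the singular locus $X_\sing$ and there to \emph{clear denominators}: coherence of $\Omega^1_X$ lets us replace the possibly unbounded form $\alpha$ by an honest K\"ahler differential that pulls back holomorphically. First note that $\alpha$ is automatically a reflexive form: since $\Omega^{[1]}_X=j_*\Omega^1_{X_\sm}$ we have $\Gamma(X,\Omega^{[1]}_X)=\Gamma(X_\sm,\Omega^1_{X_\sm})$, so $\alpha\in\Gamma(X,\Omega^{[1]}_X)$. On $\pi^{-1}(X_\sm)$ the form $\tilde\alpha$ is holomorphic by construction, and $\pi^{-1}(X_\sm)$ contains the complement of the exceptional locus $\bigcup_iE_i$; hence there is nothing to prove away from the $E_i$, and the assertion is local on $X$ near a point $x_0\in X_\sing$.

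For the main step, consider the canonical morphism of coherent sheaves $c\colon\Omega^1_X\to\Omega^{[1]}_X=(\Omega^1_X)^{**}$. Since $\Omega^1_{X_\sm}$ is locally free, $c$ is an isomorphism over $X_\sm$, so its cokernel $\sC:=\operatorname{coker}(c)$ is a coherent sheaf supported on $X_\sing$. By the analytic Nullstellensatz the germ $\sC_{x_0}$ is annihilated by $\sI_{X_\sing,x_0}^{N}$ for some $N\geq0$, where $\sI_{X_\sing}$ is the (reduced) ideal sheaf of $X_\sing$; equivalently $\sI_{X_\sing,x_0}^{N}\cdot\Omega^{[1]}_{X,x_0}\subseteq\operatorname{image}(c)_{x_0}$. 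Choosing some $h\in\sI_{X_\sing,x_0}^{N}$ that does not vanish identically, we obtain, after shrinking to a neighbourhood $V$ of $x_0$, a K\"ahler form $\beta\in\Gamma(V,\Omega^1_X)$ with $c(\beta)=h\cdot\alpha$; in particular $\beta|_{V\cap X_\sm}=h\cdot\alpha|_{V\cap X_\sm}$ as genuine $1$-forms, because $c$ is an isomorphism there.

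Now pull back. The K\"ahler form $\beta$ has a holomorphic pull-back $\pi^\flat\beta\in\Gamma(\pi^{-1}(V),\Omega^1_{\tilde X})$ via the canonical morphism $\pi^*\Omega^1_X\to\Omega^1_{\tilde X}$; in local embedding coordinates $\pi^\flat$ sends $\sum_k a_k\,dw_k$ to $\sum_k(a_k\circ\pi)\,d(w_k\circ\pi)$. Restricting to $\pi^{-1}(V\cap X_\sm)$ gives $\pi^\flat\beta=(h\circ\pi)\cdot\tilde\alpha$, so $(h\circ\pi)\cdot\tilde\alpha$ extends holomorphically across the exceptional locus. Since $h\not\equiv0$ and $\pi$ is surjective, $h\circ\pi\not\equiv0$, and therefore $\tilde\alpha=\pi^\flat\beta/(h\circ\pi)$ is a \emph{meromorphic} section of $\Omega^1_{\tilde X}$ on $\pi^{-1}(V)$ whose polar set lies in the zero locus of $h\circ\pi$; as $h$ vanishes on $X_\sing\supseteq\pi(\bigcup_iE_i)$, these poles lie along the $E_i$. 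This holds near every point of $X_\sing$, and $\tilde\alpha$ is already holomorphic off $\bigcup_iE_i$, so $\tilde\alpha$ is a global meromorphic section of the vector bundle $\Omega^1_{\tilde X}$ with polar set contained in $\bigcup_iE_i$. Setting $r_i:=\max\{0,-\operatorname{ord}_{E_i}(\tilde\alpha)\}$, the pole order along the irreducible divisor $E_i$, each $r_i$ is a well-defined finite non-negative integer and $\tilde\alpha\in\Gamma(\tilde X,\Omega^1_{\tilde X}(\sum_ir_i\,E_i))$, with the $r_i$ minimal by construction. (Equivalently, after the twist $\tilde\alpha$ is holomorphic outside the codimension-$\geq2$ set $\bigcup_{i\neq j}(E_i\cap E_j)\cup\bigcup_i\operatorname{Sing}(E_i)$ and extends across it by the Second Riemann Extension Theorem on the manifold $\tilde X$.)

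The only non-formal point is the holomorphic growth control. A priori $\alpha$ could behave very wildly near $X_\sing$, in contrast to the algebraic setting where $\tilde\alpha$ is automatically rational, and one must exclude essential-singularity behaviour of $\tilde\alpha$ along the $E_i$. This is precisely what the clearing-of-denominators step rules out, and coherence of $\Omega^1_X$ together with the analytic Nullstellensatz is exactly the input that makes it work.
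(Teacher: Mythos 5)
Your central clearing-of-denominators step is correct, and it takes a genuinely different route from the paper: you work entirely on $X$, using coherence of $\operatorname{coker}\bigl(\Omega^1_X\to\Omega^{[1]}_X\bigr)$, the R\"uckert Nullstellensatz, and the functorial pull-back of K\"ahler differentials, whereas the paper first extends $\tilde\alpha$ to $\tilde X\setminus\bigcup_iE_i$ by Hartogs and then clears denominators using Grauert's direct image theorem, namely that $\pi_*\Omega^1_{\tilde X}$ is coherent and hence that $\Omega^{[1]}_X/\pi_*\Omega^1_{\tilde X}$ is a torsion coherent sheaf. Your variant is the natural analytification of the ``obvious algebraic argument'' the paper alludes to, and it avoids Grauert's theorem altogether.

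However, your reduction contains a genuine error. You claim that $\pi^{-1}(X_\sm)$ contains $\tilde X\setminus\bigcup_iE_i$, i.e.\ that $\pi^{-1}(X_\sing)\subseteq\bigcup_iE_i$. Under the paper's (quite weak) definition of a resolution this is false: the exceptional locus of a proper bimeromorphic map from a manifold onto a normal space need not have pure codimension $1$; purity holds when the target is smooth or locally $\bQ$-factorial, not when it is merely normal. A small resolution of $\{xy=zw\}\subset\bC^4$ is a resolution in the paper's sense whose exceptional locus is a single $\bP^1$ and which has no exceptional divisors at all, so your inclusion would assert $\tilde X=\pi^{-1}(X_\sm)$, which is absurd; this is exactly the situation the paper's Hartogs step is designed to handle. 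The same slip recurs at the end: from ``polar set $\subseteq\{h\circ\pi=0\}$'' and ``$h$ vanishes on $X_\sing\supseteq\pi(\bigcup_iE_i)$'' one cannot conclude that the poles lie along the $E_i$, because $\{h=0\}$ is a hypersurface while $X_\sing$ has codimension $\geq 2$, so $\{h\circ\pi=0\}$ necessarily contains divisorial components unrelated to the exceptional set. Fortunately the repair lies inside your own argument: the polar set $P$ of the meromorphic section $\tilde\alpha$ is analytic of pure codimension $1$; it is disjoint from $\pi^{-1}(X_\sm)$, where $\tilde\alpha$ is holomorphic by definition, hence $P\subseteq\pi^{-1}(X_\sing)$; and every codimension-$1$ component of $\pi^{-1}(X_\sing)$ is automatically a $\pi$-exceptional divisor, since its image lies in the codimension-$\geq 2$ set $X_\sing$. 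With that substitution (and your final Riemann-extension remark to cross the remaining codimension-$\geq 2$ locus), the proof goes through.
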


\begin{proof}
The sheaf $\Omega^1_{\tilde{X}}$ is a vector bundle so that $\tilde{\alpha}$ extends over analytic subsets of codimension $>1$ by Hartog's theorem. This already shows that $\tilde{\alpha}\in\Gamma(\tilde{X}\backslash\bigcup_iE_i,\Omega^1_{\tilde{X}})$.

Recall that by Grauert's theorem \cite[p.~235]{Grau60} the quotient $\Omega^{[1]}_X/\pi_*\Omega^1_{\tilde{X}}$ is a torsion coherent sheaf. In particular, at least locally on $X$, there exist a holomorphic function $f:X\to\bC$ such that $f$ is not identically zero on any irreducible component of $X$ and $f\cdot\alpha$ has zero image in $\Omega^{[1]}_{X}/\pi_*\Omega^1_{\tilde{X}}$. In other words,  $(f\circ\pi)\cdot\tilde{\alpha}\in\Gamma(\tilde{X},\Omega^1_{\tilde{X}})$ and this shows the claim.
\end{proof}

\begin{lem}\label{lem-discrete-case}
Let $\pi:\tilde{X}\to X$, $\alpha$ and $\tilde{\alpha}$ be as in Notation~\ref{not-alpha-tilde}. If $\tilde{\alpha}$ extends to $\pi^{-1}(X\backslash \{x\})\subset \tilde{X}$ for some $x\in X$, then it also extends to $\tilde{X}$.
\end{lem}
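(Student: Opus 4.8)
The plan is to make the statement local at $x$ and to show that the only obstruction to extending $\tilde\alpha$ is a finite collection of poles along exceptional divisors contracted to the single point $x$, each of which must in fact have order zero. First I would shrink $X$ to a small Stein neighbourhood $U$ of $x$ and put $\tilde U=\pi^{-1}(U)$, so that by hypothesis $\tilde\alpha$ is holomorphic on $\tilde U\setminus\pi^{-1}(x)$. By Lemma~\ref{lem-r-i} the form has at worst poles along the exceptional divisors $E_i$, and the hypothesis forces the minimal order $r_i$ to vanish for every $E_i$ with $\dim\pi(E_i)>0$, since the generic point of such an $E_i$ lies over $X\setminus\{x\}$. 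The components of the fibre $\pi^{-1}(x)$ of codimension $\geq 2$ are harmless, because $\Omega^1_{\tilde X}$ is locally free and sections extend across them by Hartogs, exactly as in Lemma~\ref{lem-r-i}. Hence it remains to prove $r_i=0$ for the finitely many prime divisors $E_1,\dots,E_k$ with $\pi(E_i)=\{x\}$, and after passing to a strong resolution I may assume $\pi^{-1}(x)_\red$ has simple normal crossings.

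Now I would exploit closedness, noting first that $d\tilde\alpha=0$ as a meromorphic $2$-form since it vanishes on the dense open set $\pi^{-1}(U_\sm)$. A closed meromorphic $1$-form with normal crossing poles is governed by two pieces of data along each $E_i$: its residue and, for higher order poles, a local meromorphic potential. I would treat the residue first. The residue $c_i=\operatorname{Res}_{E_i}\tilde\alpha$ is a holomorphic function on $E_i$ which is constant because $E_i$ is irreducible and $\tilde\alpha$ is closed, and it equals the period $c_i=\tfrac{1}{2\pi i}\oint_{\gamma_i}\tilde\alpha$ over a small meridian $\gamma_i$ of $E_i$ taken at a general point. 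Since $\gamma_i$ lies in $\tilde U\setminus\pi^{-1}(X_\sing)\cong U_\sm$, we have $c_i=\tfrac{1}{2\pi i}\oint_{\pi(\gamma_i)}\alpha$, a period of the closed form $\alpha$. The decisive point is that, because $E_i$ is contracted to a \emph{point}, the meridian $\gamma_i$ is a torsion class in $H_1(U_\sm,\bZ)$: this is the negative definiteness of a contracted exceptional configuration (Grauert's criterion, generalising Mumford's computation of the link of a normal surface singularity), which I would read off from the deformation retraction of $\tilde U$ onto the compact fibre $\pi^{-1}(x)$ and the Thom--Gysin sequence of $\pi^{-1}(x)\subset\tilde U$. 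A closed form has zero period over a torsion cycle, so all $c_i=0$ and $\tilde\alpha$ has no logarithmic poles.

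Once the residues vanish, the Thom--Gysin sequence shows that $\tilde\alpha$ differs from a closed holomorphic form on $\tilde U$ by the differential $dG$ of a meromorphic function $G$ on $\tilde U$ whose poles, like those of $\tilde\alpha$, lie only over $x$; in particular $G$ is holomorphic on $\pi^{-1}(U\setminus\{x\})$. The holomorphic summand already extends across $\pi^{-1}(x)$, and $G$ is handled by descent: being holomorphic on $\pi^{-1}(U\setminus\{x\})$ and constant on the compact connected fibres of $\pi$, it descends to a function $h\in\Gamma(U\setminus\{x\},\sO_U)$, which extends to $\Gamma(U,\sO_U)$ by normality of $X$ since $\{x\}$ has codimension $\geq 2$; thus $G=\pi^*h$ is holomorphic on $\tilde U$ and carries no poles. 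Therefore every $r_i=0$, so $\tilde\alpha\in\Gamma(\tilde U,\Omega^1_{\tilde X})$, and gluing these local extensions over $X$ proves the claim. I expect the torsion step to be the main obstacle: in dimension $>2$ the vanishing of the periods over the meridians of divisors contracted to a point is exactly where Grauert's negativity, rather than any formal manipulation of the form, is indispensable; a secondary technical point is to upgrade the pointwise residue computation to a consistent global splitting of $\tilde\alpha$ into a holomorphic form plus $dG$ along the crossing locus.
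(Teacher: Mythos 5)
Your route (vanishing of residues, then a global meromorphic potential, then descent of the potential to $X$) is entirely different from the paper's proof, which argues by induction on $\dim_x X$: it cuts $\tilde{X}$ by a general hyperplane through a general point of a hypothetical polar divisor (Claim~\ref{claim-cutting-down}) and reduces to the two-dimensional case, where extension of closed $1$-forms is quoted from Steenbrink--van Straten. Unfortunately your argument has a genuine gap at its third step. Granting that all residues vanish, the local structure theory of closed meromorphic $1$-forms with normal crossing poles gives local potentials $f_V$ with $df_V=\tilde{\alpha}$, and since any two such potentials differ by a constant, their polar parts glue canonically to a global section of the quotient sheaf $\sO_{\tilde{X}}(\ast P)/\sO_{\tilde{X}}$, where $P\subset\pi^{-1}(x)$ is the polar divisor and $\sO_{\tilde{X}}(\ast P)$ denotes meromorphic functions with poles only along $P$. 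Your function $G$ is a lift of this section through
\begin{equation*}
0\to\sO_{\tilde{X}}\to\sO_{\tilde{X}}(\ast P)\to\sO_{\tilde{X}}(\ast P)/\sO_{\tilde{X}}\to 0,
\end{equation*}
and the obstruction to the existence of such a lift lies in $H^1(\pi^{-1}(U),\sO_{\tilde{X}})\cong\Gamma(U,R^1\pi_*\sO_{\tilde{X}})$ (Leray, with $U$ a small Stein neighbourhood of $x$). This group vanishes only when $R^1\pi_*\sO_{\tilde{X}}=0$ near $x$, e.g.\ for rational singularities --- for which the extension theorem was already known --- and is nonzero in exactly the cases the paper is after: already for the cone over an elliptic curve, the simplest normal surface singularity, one has $\Gamma(U,R^1\pi_*\sO_{\tilde{X}})\cong\bC$, even though the lemma itself holds there by Steenbrink--van Straten. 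You offer no argument that the obstruction class of $\tilde{\alpha}$ vanishes, and producing one is essentially the whole content of the lemma: once the lemma is known the decomposition is trivial (take $\omega=\tilde{\alpha}$, $G=0$), so what you call a ``secondary technical point'' is in fact where the entire difficulty sits. (The remainder of that step --- $G$ constant on the compact connected fibres, descent to $h$, extension of $h$ by normality --- is fine once $G$ exists.)

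A secondary problem is the justification of the torsion statement for meridians. It is true, but it does not follow from ``the deformation retraction of $\pi^{-1}(U)$ onto $\pi^{-1}(x)$ and the Thom--Gysin sequence''. From the exact sequence $H_2(\pi^{-1}(U),\bQ)\to H_2\bigl(\pi^{-1}(U),\pi^{-1}(U)\backslash P;\bQ\bigr)\to H_1(\pi^{-1}(U)\backslash P,\bQ)$, the meridians of the divisorial components $E_i$ of $P$ die rationally if and only if the classes $[E_i]$ pair nondegenerately with $H_2(\pi^{-1}(U),\bQ)$, i.e.\ are linearly independent in $H^2$ of the neighbourhood; this is a genuinely complex-geometric fact, not a topological one. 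In dimension two it is Mumford's computation resting on Grauert's negative definiteness; in higher dimension the correct input is not ``Grauert's criterion'' but the negativity lemma for divisors contracted to a point --- precisely the statement (BCHM, Lem.~3.6.2, in the analytic form via Grauert) that the paper itself invokes at the end of the proof of Proposition~\ref{prop-extension}. So this half of your argument is repairable, but only by importing that input, and it would still leave the gap described above.
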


\begin{proof}
Since two resolutions are dominated by a third, the extendability of $\tilde{\alpha}$ does not depend on the particular choice of $\pi$. Furthermore extendability can be checked after shrinking $X$ to an arbitrarily small neighbourhood of $x$. By~\cite[Thm.~3.45]{Koll07} this implies that we may assume that $\pi$ is a projective resolution, i.e., there exists a closed analytic embedding $\tilde{X}\subset X\times \bP^N$ for some $N>0$. 

We prove the lemma by induction on $n=\dim_x(X)$.

\medskip
\textsl{Start of induction.} For $n\leq 2$, either $x\in X_\sm$ and the lemma is obvious, or $x\in X$ is a normal surface singularity. In the former case, we know by~\cite[Cor. (1.4)]{SvS85} and the closedness assumption that $\alpha$ extends to $\tilde{X}$.

\medskip
\textsl{Inductive step.} Suppose that $n\geq 3$. Let $E_1,\cdots,E_s$ be the $\pi$-exceptional divisors contained in the fiber $\pi^{-1}(\{x\})$. Using the notation of Lemma~\ref{lem-r-i}, we need to show that $r_1=\cdots=r_s=0$. Suppose to the contrary that this fails, say $r_1>0$. We show that this leads to a contradiction. Write $F:=E_1$ and $r:=r_1$.

Let $y\in F$ be a general point of $F$. Then there exists a smooth neighbourhood $F'\subset F$ of $y$ such that the differential form $\tilde{\alpha}$ induces a section of the vector bundle $\Omega^1_{\tilde{X}}(r\cdot F)|_{F'}$ that has no zero at $y\in F'$.

\begin{claim}\label{claim-cutting-down}
There exists a hyperplane $L\subset\bP^n$ such that $\tilde{H}:= X\times L\cap \tilde{X}$ and $F'_H:=F'\cap \tilde{H}$ satisfy the following:
\begin{enumerate}
 \item\label{it-y-contained} $y\in \tilde{H}$,
 \item\label{it-smoothness} the complex space $\tilde{H}$ is smooth in a neighbourhood of $\pi^{-1}(x)$, and 
 \item\label{it-poles} the pull-back $\tilde{\alpha}_H\in\Gamma(\tilde{H}\backslash\bigcup_iE_i,\Omega^1_{\tilde{H}})$ of $\tilde{\alpha}$ induces a section $\tilde{\alpha}_H|_{F'_H}\in\Gamma(F'_H,\Omega^1_{\tilde{H}}(r\cdot F'_H)|_{F'_H})$ that has no zero at $y\in F'_H$.
\end{enumerate}
\end{claim}

\begin{proof}[Proof of the Claim~\ref{claim-cutting-down}]
Let $\mathcal{H}$ be the set of all hyperplanes $L$ satisfying Item~(\ref{it-y-contained}) and let $\mathcal{H}_2,\mathcal{H}_3\subset \mathcal{H}$ be the subsets of hyperplanes satisfying Items~(\ref{it-smoothness}) and~(\ref{it-poles}), respectively. We need to show that $\mathcal{H}_2\cap \mathcal{H}_3\neq\emptyset$. It certainly suffices to prove that a general element $L\in \mathcal{H}$ is contained both in $\mathcal{H}_2$ and $\mathcal{H}_3$.

A general hyperplane $L\in \mathcal{H}$ is contained in $\mathcal{H}_2$ by Bertini's theorem, see the proof of~\cite[Cor. (II.7)]{Man82}.

To see the claim for $\mathcal{H}_3$, observe that there exists a non-empty open subset $\mathcal{H}_3'\subset\mathcal{H}$ such that $\dim_\bC T_yF'\cap T_y\tilde{H}=n-2>0$ and $T_y\tilde{H}\twoheadrightarrow N_{F'/\tilde{X}}|_y$ for any $L\in\mathcal{H}_3'$. Moreover, the resulting linear map $\bigoplus_{L\in\mathcal{H}_3'}T_y\tilde{H}\to T_y\tilde{X}$ is surjective. Dualizing and twisting shows that the linear map $\Omega^1_{\tilde{X}}(r\cdot F)|_y\to \prod_{L\in\mathcal{H}_3'}\Omega^1_{\tilde{H}}(r\cdot F'_H)|_y$ between vector spaces is injective. In particular, the non-zero vector $\tilde{\alpha}|_y\in \Omega^1_{\tilde{X}}(r\cdot F')|_y$ is mapped to a non-zero vector $\tilde{\alpha}_H|_y\in \Omega^1_{\tilde{H}}(r\cdot F'_H)|_y$ if $L$ lies in a suitable non-empty open subset $\mathcal{H}_3''\subset\mathcal{H}_3'$. This finishes the proof since $\mathcal{H}_3''\subset\mathcal{H}_3$.
\end{proof}

From now on, let $\tilde{H}$ and $\tilde{\alpha}_H$ be as in Claim~\ref{claim-cutting-down}. By Item~(\ref{it-smoothness}) of Claim~\ref{claim-cutting-down} we may shrink $X$ so that $\tilde{H}$ is smooth. Let $H\to \pi(\tilde{H})\subset X$ be the normalization. Recall that by~\cite[Prop. 71.15]{KK83} the resolution of singularities $\pi:\tilde{H}\to \pi(\tilde{H})$ factors through $H$. Then, the induced map $\pi_H:\tilde{H}\to H$ is a resolution of singularities of the normal complex space $H$. Write $x_H:=\pi_H(y)$. Then $\tilde{\alpha}_H$ is a closed differential form defined on the complement of $\pi_H^{-1}(x_H)$ in some open neighbourhood. There are two cases.
\begin{description}
 \item[Case 1: $x_H\in H_\sm$] Since $dim_{x_H}H_\sm=n-1\geq 2$, Hartog's theorem states that any differential form defined on a punctured neighbourhood of $x_H\in H$ extends across $x_H$. This applies to the form defined by $\tilde{\alpha}_H$, which contradicts Item~(\ref{it-poles}) of Claim~\ref{claim-cutting-down}, since $r>0$.
 \item[Case 2: $x_H\in H_\singu$] The inductive hypothesis applied to $\pi_H:\tilde{H}\to H$ shows that $\tilde{\alpha}_H$ extends to $\tilde{H}$, which yields again a contradiction to Item~(\ref{it-poles}) of Claim~\ref{claim-cutting-down}.
\end{description}
This finishes the proof of Lemma~\ref{lem-discrete-case}.
\end{proof}

\begin{proof}[Proof of Proposition~\ref{prop-extension}]
We maintain Notation~\ref{not-alpha-tilde} and the notation in Lemma~\ref{lem-r-i}. Using similar arguments as in the proof of Lemma~\ref{lem-discrete-case} we may assume that $\pi$ is a projective strong resolution.

Write $E=\bigcup_{r_i>0}E_i$ and $Z=\pi(E)\subset X$. We claim that $Z=\emptyset$. To this end, let us assume that $Z\neq\emptyset$ and show that this leads to a contradiction. By assumption, $r:=\text{max}\{r_i:i\in I\}>0$ is positive.

Observe that, in order to find a contradiction, we can shrink $X$ to an open subset that has non-empty intersection with $Z$. In so doing we may further assume that
\begin{enumerate}
 \item\label{it-irene} $Z\subset X$ is smooth and $\Omega^1_Z\cong\sO_Z\oplus\cdots\oplus\sO_Z$,
 \item the inclusion $Z\subset X$ admits a holomorphic left inverse $p:X\to Z$,
 \item\label{it-submersive} the map $p\circ \pi:\tilde{X}\to Z$ and its restrictions to $E_i$, $E_i\cap E_j$ are submersive for all $i,j$, and
 \item\label{it-isolated-problem} if we write $X_z:=p^{-1}(\{z\})$, $\tilde{X}_z:=\pi^{-1}(X_z)$ and $E_z:=E\cap\tilde{X}_z$ for $z\in Z$, then $\tilde{X}_z\to X_z$ is a strong resolution of a normal complex space and $E_z$ is an exceptional divisor mapped to $z\in X_z$. For normality, see the proof of~\cite[Thm. (II.5)]{Man82}.
\end{enumerate}
We will obtain the desired contradiction by considering for general $z\in Z$ the following commutative diagram
\begin{equation}\label{eqn-ex-seq-2}
 \begin{array}{l}\xymatrix{
0 \ar[r] &  (p\circ\pi)^*\Omega^1_Z(r\cdot E)\big|_{E_z} \ar[r] & \Omega^1_{\tilde{X}}(r\cdot E)\big|_{E_z}  \ar[r]\ar[d]_{{\tilde{\alpha}|_{E_z}\atop\downmapsto}\atop\tilde{\alpha}_z|_{E_z}} & \Omega^1_{\tilde{X}/Z}(r\cdot E)\big|_{E_z} \ar[r]\ar[d]^{\cong\text{ by Items~(\ref{it-submersive}),~(\ref{it-isolated-problem})}} & 0 \\
 & & \Omega^1_{\tilde{X}_z}(r\cdot E_z)\big|_{E_z}\ar[r]^-{\cong} & \Omega^1_{\tilde{X}_z/\{z\}}(r\cdot E_z)\big|_{E_z}, & 
} \end{array}
\end{equation}
which arises from the locally split exact sequence $0\to (p\circ\pi)^*\Omega^1_Z\to \Omega^1_{\tilde{X}}\to \Omega^1_{\tilde{X}/Z}\to 0$ of vector bundles by twisting and cutting down. By definition of $r$ and since $z$ is general, the section $\tilde{\alpha}\in\Gamma(\tilde{X},\Omega^1_{\tilde{X}}(r\cdot E))$ has non-zero restriction
\begin{equation}\label{eqn-antje}
0\neq \tilde{\alpha}|_{E_z}\in\Gamma\bigl(E_z,\Omega^1_{\tilde{X}}(r\cdot E)\big|_{E_z}\bigr).
\end{equation}
Observe that by Item~(\ref{it-isolated-problem}) restricting $\alpha$ yields a closed reflexive differential form $\alpha_z\in\Gamma(X_z,\Omega^{[1]}_{X_z})$ that extends to a differential form $\tilde{\alpha}_z\in\Gamma(\tilde{X}_z\backslash E_z,\Omega^1_{\tilde{X}_z})$. By Lemma~\ref{lem-discrete-case} we even have $\tilde{\alpha}_z\in \Gamma(\tilde{X}_z,\Omega^1_{\tilde{X}_z})\subset \Gamma(\tilde{X}_z,\Omega^1_{\tilde{X}_z}(r\cdot E_z))$ so that it induces the zero section
\begin{equation}\label{eqn-josef}
\tilde{\alpha}_z|_{E_z}=0\in\Gamma(E_z,\Omega^1_{\tilde{X}_z}(r\cdot E_z)|_{E_z}).
\end{equation}
Equations~(\ref{eqn-antje}) and~(\ref{eqn-josef}) together with Diagram~(\ref{eqn-ex-seq-2}) show that
\begin{equation}\label{eqn-marina}
\tilde{\alpha}|_{E_z}\in\Gamma\bigl(E_z,(p\circ\pi)^*\Omega^1_Z(r\cdot E)\big|_{E_z}\bigr) \subset \Gamma\bigl(E_z,\Omega^1_{\tilde{X}}(r\cdot E_z)\big|_{E_z}\bigr).
\end{equation}
Recall that by Item~(\ref{it-irene}) there exists an isomorphism
\begin{equation}\label{eqn-tabea}
{\textstyle \Gamma\bigl(E_z,(p\circ\pi)^*\Omega^1_Z(r\cdot E)\big|_{E_z}\bigr)\cong\bigoplus_{t=1}^{dim(Z)}\Gamma\bigl(E_z,\sO_{\tilde{X}_z}(r\cdot E_z)|_{E_z}\bigr).}
\end{equation}
Taking~(\ref{eqn-antje}),~(\ref{eqn-marina}) and~(\ref{eqn-tabea}) together we finally find the desired contradiction if the vector space on the right hand side of Equation~(\ref{eqn-tabea}) is shown to be zero. In the algebraic setting this follows directly from the negativity lemma in~\cite[Lem. 3.6.2(1)]{BCHM10}. In our analytic setting we can use the same proof as in \emph{loc. cit.}, replacing only the use of the Hodge index theorem on an algebraic surface by~\cite[p. 367]{Grau62}.
\end{proof}

\section{Proof of Theorem~\ref{thm-main}}
After shrinking $X$ if necessary, we may assume that $T_X=\sO_Xv_1\oplus\cdots\oplus\sO_Xv_n$ for pairwise commuting vector fields $v_i\in\Gamma(X,T_X)$, $1\leq i\leq n=dim(X)$. In other words, the Lie bracket $[v_i,v_j]=0$ vanishes for any $1\leq i,j\leq n$. Let $\alpha_i\in\Gamma(X,\Omega^{[1]}_X)$, $1\leq i\leq n$, be the dual basis.

In the following we denote the Lie derivative and the contraction along a vector field $v$ by $\sL_v$ and $\iota_v$, respectively. Given arbitrary indices $1\leq i,j,k\leq n$ we calculate
\[
0=\sL_{v_j}\delta_{i,k}=\sL_{v_j}\iota_{v_k}\alpha_i = \iota_{[v_j,v_k]}\alpha_i+\iota_{v_k}\sL_{v_j}\alpha_i=\iota_{v_k}\sL_{v_j}\alpha_i.
\]
Since $k$ is arbitrary we deduce that $\sL_{v_j}\alpha_i=0$. This in turn implies that
\[
0=\sL_{v_j}\alpha_i=d\iota_{v_j}\alpha_i + \iota_{v_j}d\alpha_i=d\delta_{i,j} + \iota_{v_j}d\alpha_i=\iota_{v_j}d\alpha_i.
\]
Since $j$ is arbitrary we obtain $d\alpha_i=0$. In particular, the differential form $\alpha_i$ extends to any resolution by Proposition~\ref{prop-extension}. Now we can argue as in~\cite[(1.6)]{SvS85}.

\bibliography{bibliography/general}{}
\bibliographystyle{bibliography/skalpha}

\end{document}